\newtheorem{theorem}{Theorem}[section]
\newtheorem{lemma}[theorem]{Lemma}
\theoremstyle{definition}
\newtheorem{remark}[theorem]{Remark}
\numberwithin{equation}{section}
\numberwithin{equation}{section}
\begin{document}
\title[On the divergence of subsequences of partial Walsh-Fourier sums]{On
the divergence of subsequences of partial Walsh-Fourier sums}
\author[U. Goginava and G. Oniani]{Ushangi Goginava and Giorgi Oniani}
\address{ I. Vekua Institute of Applied Mathematics and Faculty of Exact and
Natural Sciences of I. Javakhishvili Tbilisi State University, Tbilisi 0186,
2 University str., Georgia}
\email{zazagoginava@gmail.com}
\address{Department of Mathematics \\
Akaki Tsereteli State University \\
59 Tamar Mepe St., Kutaisi 4600\\
Georgia}
\email{oniani@atsu.edu.ge}
\date{}

\begin{abstract}
A class of increasing sequences of natural numbers $(n_k)$ is found for
which there exists a function $f\in L[0,1)$ such that the subsequence of
partial Walsh-Fourier sums $(S_{n_k}(f))$ diverge everywhere. A condition
for the growth order of a function $\varphi:[0,\infty)\rightarrow[0,\infty)$
is given fulfilment of which implies an existence of above type function $f$
in the class $\varphi(L)[0,1)$.
\end{abstract}

\subjclass[2010]{42C10}
\keywords{Walsh-Fourier series, subsequence of partial sums, everywhere
divergence}
\maketitle

\section{Definitions and Notation}

Let $r:\mathbb{R}\rightarrow \mathbb{R}$ be the function which is $1$%
-periodic and such that $r(x)=1$ if $x\in [0,1/2)$ and $r(x)=-1$ if $x\in
[1/2,1)$. The \emph{Rademacher system} $(r_n)_{n\in\mathbb{N}_0}$ is defined
as follows: $r_n(x)=r(2^nx)$ $(n\in\mathbb{N}_0, x\in[0,1))$. Here and below 
$\mathbb{N}_0$ denotes the set of all non-negative integers.

The \emph{binary coefficients} of a number $n\in\mathbb{N}_0$ will be
denoted by $\varepsilon_j(n)$ $(j\in\mathbb{N}_0)$, i.e. $%
\varepsilon_j(n)\in \{0,1\}$ for every $j\in\mathbb{N}_0$ and $%
n=\sum_{j=0}^\infty \varepsilon_j(n) 2^j$. Note that $\varepsilon_j(n)=0$ if 
$j$ is sufficiently large.

The \emph{Walsh system } $(w_n)_{n\in\mathbb{N}_0}$ is defined by 
\begin{equation*}
w_n(x)=\prod_{j=0}^\infty r_j(x)^{\varepsilon_j(n)}\;\;\;\;\;\;\;(n\in%
\mathbb{N}_0, x\in[0,1)).
\end{equation*}

The \emph{partial Walsh-Fourier sums} of a function $f\in L[0,1)$ are
defined as 
\begin{equation*}
S_{n}(f)=\sum_{k=0}^{n-1}\widehat{f}(k)w_{k}\;\;\;\;(n\in \mathbb{N}_{0}),
\end{equation*}%
where $\widehat{f}(k)=\int_{[0,1)}fw_{k}$.

The \emph{partial Fourier sums} (i.e. partial sums with respect to the
trigonometric system) of a function $f\in L[0,2\pi)$ will be denoted by $%
\mathbf{S}_n(f)$ $(n\in\mathbb{N})$.

Throughout the paper we will use the following convention: $\log n$ stands
for $\log_2 n$.

For $n\in\mathbb{N}_0$ let us denote 
\begin{equation*}
V(n)=\varepsilon_0(n)+ \sum_{j=1}^\infty
|\varepsilon_j(n)-\varepsilon_{j-1}(n)|.
\end{equation*}
The quantity $V(n)$ is called the \emph{variation of a number} $n$. It is
easy to check that 
\begin{equation*}
V(n)\leq C\log (2n)\;\;\;\;(n\in \mathbb{N})
\end{equation*}
and if $n_k=\sum_{j=0}^k 2^{2j}$ $(k\in\mathbb{N})$ then 
\begin{equation*}
c\log (2n_k)\leq V(n_k)\leq C \log (2n_k)\;\;\;\;(k\in \mathbb{N}).
\end{equation*}
Here $c$ and $C$ are absolute constants.

Note that for the Dirichlet kernels with respect to the Walsh system there
are valid the following estimations (see, e.g.. \cite{1}, p. 34) 
\begin{equation*}
V(n)/8\leq \Vert D_{n}\Vert _{L}\leq V(n)\;\;\;(n\in \mathbb{N}_{0}).
\end{equation*}

We will say that a sequence of natural numbers $(n_k)$ has \emph{bounded
variation} if $\sup_k V(n_k)<\infty$.

Let us define the \emph{spectrum of a number} $n\in \mathbb{N}_0$ as follows 
\begin{equation*}
\text{Sp}(n)=\{j\in\mathbb{N}_0:\varepsilon_j(n)=1\}.
\end{equation*}

Konyagin \cite{2} has considered increasing sequences of natural numbers $%
(n_{k})$ having the following property: 
\begin{equation*}
\max \text{Sp}(n_{k})<\min \text{Sp}(n_{k+1})\;\;\;\text{for every}%
\;\;\;k\in \mathbb{N}.
\end{equation*}%
We will refer such $(n_{k})$ as a \emph{sequence with separated spectrums}.

It is easy to see that $n_k=2^{k^2}\sum_{j=0}^k 2^{2j}$ $(k\in\mathbb{N})$
is a sequence with separated spectrums which has unbounded variation.

Let $(n_k)$ be an increasing sequences of natural numbers. We will call $%
(n_k)$ a \emph{sequence with nested spectrums } if 
\begin{equation*}
\text{Sp}(n_{k+1})\cap [0,\max \text{Sp} (n_{k})]=\text{Sp}(n_{k}) \;\;\;%
\text{for every}\;\;\;k\in\mathbb{N}.
\end{equation*}

It is easy to see that $n_k=\sum_{j=0}^k 2^{2j}$ $(k\in\mathbb{N})$ is a
sequence with nested spectrums which has unbounded variation.

Note that if $(n_k)$ is a sequence with nested spectrums which has unbounded
variation then its each subsequence $(m_k)$ also has the same properties.

Let $(n_k)$ be an increasing sequence of natural numbers with nested
spectrums which has unbounded variation. Denote by $\varphi_{(n_k)}$ the
function $\varphi_{(n_k)}:[0,\infty)\rightarrow [0,\infty)$ defined by the
following conditions:

\begin{enumerate}
\item[$\bullet$] $\varphi_{(n_k)}(2^{2n_\nu})=2^{2n_\nu} V(n_\nu)$ for every 
$\nu\in\mathbb{N}$;

\item[$\bullet$] $\varphi_{(n_k)}(0)=0$ and $\varphi_{(n_k)}$ is linear on
the segment $[0,2^{2n_1}]$;

\item[$\bullet$] $\varphi_{(n_k)}$ is linear on the segment $%
[2^{2n_\nu},2^{2n_{\nu+1}}]$ for every $\nu\in\mathbb{N}$.
\end{enumerate}

Obviously, $\varphi_{(n_k)}$ is a continuous and increasing function.
Furthermore, it is easy to check that 
\begin{equation*}
\varphi_{(n_k)}(u)\leq C u\log\log u\;\;\;\;(u\geq 8)
\end{equation*}
and if $n_k=\sum_{j=0}^k 2^{2j}$ $(k\in\mathbb{N})$ then 
\begin{equation*}
c u\log\log u\leq \varphi_{(n_k)}(u)\leq Cu\log\log u \;\;\;\;(u\geq 8).
\end{equation*}
Here $c$ and $C$ are absolute constants.

Let $(n_k)$ be an increasing sequence of natural numbers. We will say that a 
\emph{series} $\Sigma_j a_j$ \emph{converge} (\emph{diverge}) \emph{along} $%
(n_k)$ if the subsequence of partial sums $(S_{n_k})$ converge (diverge).

Let $\varphi:[0,\infty)\rightarrow[0,\infty)$ be a non-decreasing function
and $E\subset\mathbb{R}$ be a measurable set. By $\varphi(L)(E)$ it is
denoted the class of all measurable functions $f:E\rightarrow \mathbb{R}$
for which $\int_E \varphi(|f|)<\infty$.

By $\Phi$ we will denote the set of all non-decreasing functions $%
\varphi:[0,\infty)\rightarrow[0,\infty)$ for which $\liminf\limits_{u%
\rightarrow\infty}\varphi(u)/u>0$. Note that for every set $E\subset\mathbb{R%
}$ with finite measure and every function $\varphi\in\Phi$ it is valid the
inclusion $\varphi(L)(E)\subset L(E)$.

A sequence of positive numbers $(\alpha_k)$ is called \emph{lacunary} if
there is a number $\lambda>1$ such that $\alpha_{k+1}/\alpha_k\geq \lambda$
for every $k$.


\section{Results}

Kolmogoroff \cite{3} constructed a famous example of a function $f\in
L[0,2\pi )$ Fourier trigonometric series of which diverges almost
everywhere. After some years in the work \cite{4} he also constructed an
example of a function $f\in L[0,2\pi )$ for which the divergence takes place
everywhere.

Gosselin \cite{5} proved that for every increasing sequence of natural
numbers $(n_{k})$ there exists a function $f\in L[0,2\pi )$ such that 
\begin{equation}
\sup_{k\in \mathbb{N}}|\mathbf{S}_{n_{k}}(f)(x)|=\infty  \label{1}
\end{equation}%
for almost every $x\in \lbrack 0,2\pi )$. A function $f\in L[0,2\pi )$
satisfying (\ref{1}) for every $x\in \lbrack 0,2\pi )$ was constructed by
Totik \cite{6}.

Thus, there is no increasing sequence of natural numbers $(n_{k})$ along
which it is guaranteed almost everywhere convergence of Fourier
trigonometric series. The same is not true for Walsh-Fourier series. On the
one hand, an analogs of the examples of Kolmogoroff were constructed by
Stein \cite{7} and Schipp \cite{8,9}. On the other hand, if a sequence $%
(n_{k})$ has a bounded variation (for example, if $n_{k}=2^{k}$), then
Dirichlet kernels $D_{n_{k}}$ are uniformly bounded in the space $L[0,1)$.
It by using standard technique implies that the sums $S_{n_{k}}(f)$ converge
to $f$ almost everywhere for every function $f\in L[0,1)$. In this
connection Konyagin \cite{10} posed the problem:

\emph{Find a necessary and sufficient condition on an increasing sequence $%
(n_k)$ of natural numbers under which the partial Walsh-Fourier sums $%
(S_{n_k}(f))$ converge to $f$ almost everywhere for every function $f\in
L[0,1)$}.

Konyagin in the work \cite{2} established that the problem cannot be
resolved in terms of bounded variation of a sequence $(n_{k})$. Namely, in 
\cite{2} it was proved that if $(n_{k})$ is a sequence with separated
spectrums then $(S_{n_{k}}(f))$ converge to $f$ almost everywhere for every $%
f\in L[0,1)$.

The following theorem gives a sufficient condition implying the existence of
a Walsh-Fourier series divergent everywhere along a priori given sequence $%
(n_k)$.


\begin{theorem}
\label{t:1} If $(n_k)$ is a sequence of natural numbers with nested
spectrums which has unbounded variation then there exists a function $f\in
L[0,1)$ such that $\sup_{k\in \mathbb{N}} |S_{n_k}(f)(x)|=\infty \;\;\;\text{%
\emph{for every}}\;\;x\in[0,1).$
\end{theorem}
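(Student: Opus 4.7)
My strategy is to construct $f$ as a series of Kolmogoroff-type building blocks adapted to the nested-spectrum structure. By the remark that subsequences preserve nested spectrums and unbounded variation, I may pass to a rapidly-growing subsequence (still denoted $(n_k)$) with $V(n_k)\to\infty$ as quickly as required; write $N_k=\max\Sp(n_k)$. The nesting forces $\Sp(n_{k+1}-n_k)\subset(N_k,\infty)$, so $n_k$ and $n_{k+1}-n_k$ have disjoint binary supports and satisfy $n_{k+1}=n_k\oplus(n_{k+1}-n_k)$. Combining the standard decomposition $D_n=\sum_{j\in\Sp(n)}w_{n^{(j)}}D_{2^j}$ with the identity $w_aw_b=w_{a\oplus b}$ then yields the clean factorization
\begin{equation*}
D_{n_{k+1}}-D_{n_k}=(w_{n_{k+1}-n_k}-1)\,D_{n_k}+R_k,
\end{equation*}
where $R_k$ is a Walsh polynomial whose spectrum lies in $[n_k,n_{k+1})$ strictly above bit $N_k$. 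Together with the lower bound $\Vert D_{n_k}\Vert_L\ge V(n_k)/8$ from the excerpt, this supplies the raw material for divergent behavior of $(S_{n_k})$.

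For each $s\in\mathbb{N}$ I build a Walsh polynomial $P_s$ with $\Vert P_s\Vert_L\le s^{-2}$ such that for every $x\in[0,1)$ there is a $k$ with $|S_{n_k}(P_s)(x)|\ge s$. Starting from an elementary block $b_s$ concentrated on a short dyadic interval and modulated by $w_{n_{k_s}}$ for a suitably chosen index $k_s$ (with $V(n_{k_s})$ enormous compared to $s$ and $k_{s-1}$), a duality computation based on the factorization above shows that the partial-sum jump $|S_{n_{k_s+1}}(b_s)-S_{n_{k_s}}(b_s)|$ is of order $V(n_{k_s})$ on a dyadic interval comparable to the support of $b_s$. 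To promote this lower bound from a single interval to \emph{all} of $[0,1)$, I take a finite union of dyadic translates $\sum_{t\in T_s}b_s(\cdot\oplus t)$ whose corresponding good intervals cover $[0,1)$ and normalize to produce $P_s$; dyadic translation invariance of the Walsh system ensures that the estimate is preserved under these shifts.

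I then set $f=\sum_{s\ge 1}P_s$, which lies in $L[0,1)$ by absolute summability of norms. For any $x\in[0,1)$ and any $M$, choose $s\ge M$ and pick a $k$ with $|S_{n_k}(P_s)(x)|\ge s$. By arranging each $k_s$ so that the Walsh spectrum of $P_s$ lies in a frequency window well above the spectra of all previous $P_{s'}$, the cross-contributions $S_{n_k}(P_{s'})(x)$ with $s'\ne s$ are bounded uniformly in $x$, giving $|S_{n_k}(f)(x)|\ge s-C$ and hence $\sup_k|S_{n_k}(f)(x)|=\infty$. The principal obstacle is the \emph{everywhere} lower bound in the second paragraph: standard Kolmogoroff-type constructions deliver divergence only on a set of large measure, and the upgrade from a.e.\ to everywhere here relies precisely on the nested spectrum property, which makes dyadic translates of the elementary block interact with $S_{n_k}$ in a rigid, translation-equivariant fashion and thus permits a finite covering of $[0,1)$ by good intervals.
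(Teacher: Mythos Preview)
Your sketch has two structural gaps that are not just missing details but places where the proposed mechanism does not work.

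\textbf{The sum-of-translates step.} You claim that once the elementary block $b_s$ produces a large partial-sum jump on one dyadic interval, forming $P_s=\sum_{t\in T_s}b_s(\cdot\oplus t)$ promotes the lower bound to all of $[0,1)$ by translation equivariance. But dyadic translation does not change the Walsh spectrum, so every summand $b_s(\cdot\oplus t)$ has the \emph{same} spectrum and the \emph{same} index $k$ at which the jump occurs. Hence $S_{n_k}(P_s)(x)=\sum_{t}S_{n_k}(b_s)(x\oplus t)$, and the one ``good'' term for a given $x$ has no protection against cancellation by the remaining $|T_s|-1$ terms, which are of the same magnitude. This is exactly why the paper does \emph{not} use a sum of translates: Lemma~\ref{l:1} is a Schipp-type \emph{product} $Q=\prod_j(1+w_{\delta_j}g_j)$ in which each copy $g_j$ carries its own modulation $w_{\delta_j}$ with $\delta_j\in\{n_k-n_\nu\}\cup\{n_k-n_\nu+2^M\}$, so that the partial sum $S_{n_k}$ isolates a single piece; even so, Lemma~\ref{l:1} only obtains a good set $E_\nu$ of density $\ge 1/4$ in each top-level dyadic interval, not all of $[0,1)$.

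\textbf{The cross-contribution step.} Your claim that $S_{n_k}(P_{s'})(x)$ is ``bounded uniformly in $x$'' for $s'\neq s$ is false as stated. For $s'<s$ with $\Sp(P_{s'})$ below $n_k$ one has $S_{n_k}(P_{s'})=P_{s'}$, so the cross term is $\sum_{s'<s}P_{s'}(x)$. Since your $P_{s'}$ are built from blocks concentrated on short intervals, $\Vert P_{s'}\Vert_\infty$ is large and there is no uniform-in-$x$ bound; at best $\sum_{s'}|P_{s'}(x)|<\infty$ almost everywhere, which gives only a.e.\ divergence. The paper circumvents this entirely by working with \emph{differences} $S_{n_k}(f^\ast)-S_{n_{\beta(j)}}(f^\ast)$ across deliberately inserted spectral gaps (condition~(\ref{25})), so that all other blocks drop out identically; the upgrade from the a.e.\ statement~(\ref{26}) to everywhere is then done by a separate $L^2$-correction $f_\ast$ supported in the complementary gaps and tailored to the residual null set.

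A minor point: your asserted decomposition $D_{n_{k+1}}-D_{n_k}=(w_{n_{k+1}-n_k}-1)D_{n_k}+R_k$ with $\Sp(R_k)\subset[n_k,n_{k+1})$ is not correct; from $D_{a+b}=D_a+w_aD_b$ (valid when $a$ is a multiple of $2^{N_k+1}$ and $b<2^{N_k+1}$, applied with $a=n_{k+1}-n_k$, $b=n_k$) one gets $R_k=D_{n_{k+1}-n_k}$, whose spectrum is $[0,n_{k+1}-n_k)$.
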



\begin{remark}
\label{r:1} Let us say that sequences $(m_{k})$ and $(n_{k})$ are \emph{close%
} if the sequence $(|m_{k}-n_{k}|)$ is bounded.

Let $f\in L[0,1)$ and $x\in [0,1)$. If for an increasing sequence of natural
numbers $(n_k)$ there is valid the relation $\sup_{k\in \mathbb{N}}
|S_{n_k}(f)(x)|=\infty$ then obviously the same relation is valid for every
sequence $(m_k)$ for which $(n_k)$ is a subsequence. Furthermore, it is easy
to check that the same relation is valid also for every sequence $(m_k)$
which is close to $(n_k)$.

Thus, the analog of Theorem \ref{t:1} is valid for every sequence $(n_{k})$
containing a subsequence with nested spectrums which has unbounded variation
or more generally, containing a subsequence close to some sequence with
nested spectrums which has unbounded variation.
\end{remark}


\begin{remark}
\label{r:2} From Theorem \ref{t:1} it follows the following corollary: If $%
(n_{k})$ is a sequence of natural numbers with nested spectrums which has
unbounded variation then for each its subsequence $(m_{k})$ there exists a
function $f\in L[0,1)$ such that $\sup_{k\in \mathbb{N}}|S_{m_{k}}(f)(x)|=%
\infty $ for every $x\in \lbrack 0,1)$.
\end{remark}


\begin{remark}
\label{r:3} From the corollary given in above remark it follows that
Walsh-Fourier series may diverge everywhere along an arbitrarily sparse
sequence, namely, for every sequence of positive numbers $(\lambda _{k})$
there exists a function $f\in L[0,1)$ and an increasing sequence of natural
numbers $(n_{k})$ such that $n_{k+1}/n_{k}\geq \lambda _{k}$ $(k\in \mathbb{N%
})$ and $\sup_{k\in \mathbb{N}}|S_{n_{k}}(f)(x)|=\infty $ for every $x\in
\lbrack 0,1)$. By Gát and Goginava \cite{11} it was proved the following
particular case of this result: For every sequence of positive numbers $(\nu
_{k})$ there exists a function $f\in L[0,1)$ and an increasing sequence of
natural numbers $(n_{k})$ such that $n_{k}\geq \nu _{k}$ $(k\in \mathbb{N})$
and $\sup_{k\in \mathbb{N}}|S_{n_{k}}(f)(x)|=\infty $ for every $x\in
\lbrack 0,1)$.
\end{remark}


\begin{remark}
\label{r:4} Gát \cite{12} proved that the arithmetic means of lacunary
partial Walsh-Fourier sums $\frac{1}{N}\sum_{k=1}^{N}S_{n_{k}}(f)$ (distinct
from lacunary partial Walsh-Fourier sums) are almost everywhere convergent
for every function $f\in L[0,1)$.
\end{remark}

Another topic we are interested is the problem on finding the optimal class $%
\varphi(L)$ in which partial Walsh-Fourier sums are almost everywhere
convergent along an a priori given sequence $(n_k)$.

The problems of above type has a rich history. For the "full" sequence $%
n_{k}=k$ fundamental importance have results of Kolmogoroff \cite{3,4} and
Carleson \cite{13} according to which almost everywhere convergence of
Fourier trigonometric series is not guaranteed in $L[0,2\pi )$ and is
guaranteed in $L^{2}[0,2\pi )$, respectively. They were improved and
extended to other orthonormal systems by various authors (see, e.g., \cite%
{10} or \cite{14} for the survey of the topic). The strongest results
concerning the trigonometric and the Walsh systems known nowadays are the
following:

\begin{enumerate}
\item[$\bullet $] If $\varphi \in \Phi $ and $\varphi (u)=o(u\sqrt{\log
u/\log \log u})$ $(u\rightarrow \infty )$ then there exists a function $f\in
\varphi (L)[0,2\pi )$ such that $\sup_{k\in \mathbb{N}}$ $|\mathbf{S}%
_{k}(f)(x)|=\infty $ for every $x\in \lbrack 0,2\pi )$ (Konyagin \cite{15});

\item[$\bullet $] If $f\in L\log ^{+}L\log ^{+}\log ^{+}\log ^{+}L[0,2\pi )$
then $\lim_{k\rightarrow \infty }\mathbf{S}_{k}(f)(x)=f(x)$ for almost every 
$x\in \lbrack 0,2\pi )$ (Antonov \cite{16});

\item[$\bullet $] If $\varphi \in \Phi $ and $\varphi (u)=o(u\sqrt{\log u})$ 
$(u\rightarrow \infty )$ then there exists a function $f\in \varphi (L)[0,1)$
such that $\sup_{k\in \mathbb{N}}|S_{k}(f)(x)|=\infty $ for every $x\in
\lbrack 0,1)$ (Bochkarev \cite{17});

\item[$\bullet $] If $f\in L\log ^{+}L\log ^{+}\log ^{+}\log ^{+}L[0,1)$
then $\lim_{k\rightarrow \infty }S_{k}(f)(x)=f(x)$ for almost every $x\in
\lbrack 0,1)$ (Sjölin, Soria \cite{18}).
\end{enumerate}

Regarding the general subsequences of partial Fourier sums Konyagin in \cite%
{19} proved the following theorem:

Let $(n_k)$ be an increasing sequence of natural numbers. Then for every $%
\varphi\in\Phi$ with $\varphi(u)=o(u\log\log u)$ $(u\rightarrow\infty)$
there exists a function $f\in \varphi(L)[0,2\pi)$ such that $\sup_{k\in 
\mathbb{N}} |\mathbf{S}_{n_k}(f)(x)|=\infty $ for every $x\in[0,2\pi)$.

By Konyagin in [10] it was conjectured that if $(n_k)$ is a lacunary
sequence then Fourier trigonometric series of every $f \in L\log^+\log^+ L
[0,2\pi)$ converge almost everywhere along $(n_k)$, i.e. it was conjectured
the optimality of the class $L\log^+\log^+ L [0,2\pi)$ for almost everywhere
convergence of Fourier trigonometric series along an arbitrary lacunary
sequence $(n_k)$.

To the study of the problem it was devoted several works. Suppose $(n_{k})$
is an arbitrary lacunary sequence. Do and Lacey \cite{20} have shown that if 
$f\in L\log ^{+}\log ^{+}L\log ^{+}\log ^{+}\log ^{+}L[0,1)$ then $%
\lim_{k\rightarrow \infty }S_{n_{k}}(f)(x)=f(x)$ for almost every $x\in
\lbrack 0,1)$. Lie \cite{21} proved an analogous theorem for the
subsequences of the partial Fourier sums. Di Plinio \cite{22} generalized
this results achieving almost everywhere convergence in the larger class 
\newline
$L\log ^{+}\log ^{+}L\log ^{+}\log ^{+}\log ^{+}\log ^{+}L$ both for the
Fourier and the Walsh-Fourier cases. Finally, Lie \cite{23} established that
if $\varphi \in \Phi $ with and \newline
$\varphi (u)=o(u\log \log u\log \log \log \log u)$ $(u\rightarrow \infty )$
then there exists a function $f\in \varphi (L)[0,2\pi )$ such that $%
\sup_{k\in \mathbb{N}}|\mathbf{S}_{n_{k}}(f)(x)|=\infty $ for almost every $%
x\in \lbrack 0,2\pi )$. Thus, the optimal class $\varphi (L)$ for the almost
everywhere convergence of Fourier trigonometric series along lacunary
sequences is $L\log ^{+}\log ^{+}L$ $\log ^{+}$ $\log ^{+}\log ^{+}$ $\log
^{+}L[0,2\pi )$ which is a bit smaller then the one conjectured in \cite{10}.

The problem of finding the optimal class $\varphi (L)$ in which it is
guaranteed the almost everywhere convergence of Walsh-Fourier series along
every lacunary sequence $(n_{k})$ is still open. Note that similar problem
may be posed for individual lacunary sequences as well. In this regard it is
true the following analogue of Konyagin's result from \cite{19} formulated
above.


\begin{theorem}
\label{t:2} Let $(n_{k})$ be an increasing sequence of natural numbers with
nested spectrums which has unbounded variation. Then for every $\varphi \in
\Phi $ with $\varphi (u)=o(\varphi _{(n_{k})}(u))$ $(u\rightarrow \infty )$
there exists a function $f\in \varphi (L)[0,1)$ such that $\sup_{k\in 
\mathbb{N}}|S_{n_{k}}(f)(x)|=\infty \;\;\;\text{for every}\;\;x\in \lbrack
0,1).$
\end{theorem}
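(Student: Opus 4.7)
The plan is to adapt the Kolmogoroff--Konyagin gliding-hump construction to the Walsh--Fourier setting, using the nested spectrums hypothesis on $(n_{k})$ to keep the partial-sum operator $S_{n_{k}}$ from mixing building blocks at different scales, and using the explicit form $\varphi_{(n_{k})}(2^{2n_{\nu}})=2^{2n_{\nu}}V(n_{\nu})$ to calibrate each block so that their weighted sum lies in $\varphi(L)[0,1)$.

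\textbf{Step 1 (building block).} For each sufficiently large $\nu$, I would construct a Walsh polynomial $P_{\nu}$ with the following three properties: (i) the Walsh spectrum of $P_{\nu}$ sits in a dyadic window around $\Sp(n_{\nu})$, placed between the binary scales of $n_{\nu-1}$ and $n_{\nu+1}$, so that $S_{n_{k}}(P_{\nu})$ is nontrivial only for $k$ in the corresponding range; (ii) $P_{\nu}$ is concentrated on a set $E_{\nu}\subset[0,1)$ of measure $\asymp 2^{-2n_{\nu}}$ with $\|P_{\nu}\|_{\infty}\leq C\,2^{2n_{\nu}}$ and $\|P_{\nu}\|_{1}\leq C$; (iii) for every $x\in[0,1)$ there exists an index $k=k(\nu,x)$ in that range with $|S_{n_{k}}(P_{\nu})(x)|\geq c\,V(n_{\nu})$. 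The candidate is a Walsh-adapted Kolmogoroff polynomial: a weighted sum of modulated, Walsh-shifted Dirichlet kernels $w_{m_{\nu}}\cdot D_{n_{\nu}}(\cdot\oplus\tau_{i})$, each of $L^{1}$-norm $\asymp V(n_{\nu})$ by the preliminary estimate $\|D_{n}\|_{L}\geq V(n)/8$, with the shifts $\tau_{i}$ arranged so that the resulting Gibbs-type spike of height $\asymp V(n_{\nu})$ is thrown on every point of $[0,1)$ for some choice of the truncation index. The nested spectrums hypothesis is what guarantees that $S_{n_{k}}$ truncates at exactly the binary scales of $\Sp(n_{\nu})$ required to read off such a spike.

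\textbf{Step 2 (calibration, assembly, divergence, integrability).} From $\varphi(u)=o(\varphi_{(n_{k})}(u))$ together with $\varphi_{(n_{k})}(2^{2n_{\nu}})=2^{2n_{\nu}}V(n_{\nu})$, I would choose a strictly increasing sequence $\nu_{1}<\nu_{2}<\cdots$ with
\[
4^{j}\leq V(n_{\nu_{j}})\leq C\cdot 4^{j},\qquad \varphi\bigl(2^{2n_{\nu_{j}}}\bigr)\leq\frac{2^{2n_{\nu_{j}}}V(n_{\nu_{j}})}{16^{j}},
\]
with scales growing rapidly enough that the supports $E_{\nu_{j}}$ and the spectral windows containing the $P_{\nu_{j}}$ are pairwise disjoint, and set $f=\sum_{j\geq 1}2^{-j}P_{\nu_{j}}$. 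Fix $x\in[0,1)$, $j\in\bN$, and let $k=k(\nu_{j},x)$. Property (i) forces $S_{n_{k}}(P_{\nu_{i}})=P_{\nu_{i}}$ for $i<j$ and $S_{n_{k}}(P_{\nu_{i}})=0$ for $i>j$, while disjointness of the $E_{\nu_{i}}$ means that at most one index $i_{0}<j$ contributes pointwise, with total contribution $A(x)$ a function of $x$ alone. Hence
\[
|S_{n_{k}}(f)(x)|\;\geq\;\frac{c\,V(n_{\nu_{j}})}{2^{j}}-|A(x)|\;\geq\;c\cdot 2^{j}-|A(x)|\;\longrightarrow\;\infty\qquad(j\to\infty),
\]
giving everywhere divergence. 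For the Orlicz integrability, disjointness of supports and monotonicity of $\varphi$ yield
\[
\int_{0}^{1}\varphi(|f|)\;\leq\;\sum_{j}\varphi\bigl(2^{2n_{\nu_{j}}}\bigr)\cdot|E_{\nu_{j}}|\;\leq\;C\sum_{j}\varphi\bigl(2^{2n_{\nu_{j}}}\bigr)\cdot 2^{-2n_{\nu_{j}}}\;\leq\;C\sum_{j}\frac{V(n_{\nu_{j}})}{16^{j}}\;<\;\infty.
\]

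The main obstacle is \textbf{Step 1}: fabricating a single Walsh polynomial concentrated on a set of measure only $2^{-2n_{\nu}}$ whose partial sums $S_{n_{k}}(P_{\nu})$ reach height $V(n_{\nu})$ at \emph{every} point of $[0,1)$. This requires a Walsh analogue of the classical Kolmogoroff polynomial, built from the jump structure of the binary expansion of $n_{\nu}$ that is responsible for the variation $V(n_{\nu})$; it is precisely the nested spectrums condition on $(n_{k})$ that makes $S_{n_{k}}$ truncate at the correct scale to capture this spike at each $x$. Step 2 is then essentially standard gliding-hump bookkeeping once the building blocks are in hand.
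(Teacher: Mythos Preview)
Your outline has the right skeleton, but Step~1 as you state it is strictly stronger than what the paper's machinery (following Schipp and Konyagin) actually produces, and Step~2 leans on two unverified structural assumptions.

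In the paper's building block (Lemma~\ref{l:1}) the polynomial is a \emph{product} $P_\nu=\prod_{j=1}^{2^N}(1+w_{\delta_j}g_j)$ with $g_j=\operatorname{sgn}D_{n_\nu}(\cdot\oplus\tau_j)$, not a sum of shifted Dirichlet kernels; the product form is what simultaneously gives $0\le P_\nu\le 2^{2n_\nu}$, $\int P_\nu=1$, and a spectrum starting above $\nu$. Crucially, the conclusion is only $|S_{n_k}(P_\nu)(x)|\ge V(n_\nu)/16$ for $x$ in a set $E_\nu$ of measure at least $1/4$ (uniformly distributed over the dyadic intervals of generation $\max\Sp(n_\nu)+1$), \emph{not} for every $x\in[0,1)$. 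Your claim (iii) is precisely what a single Kolmogoroff-type block does not deliver, and the paper does not attempt it. Instead the paper runs a two-layer construction: the sum $f^{\ast}=\sum_j \tfrac{M_j}{V(n_{\nu(j)})}P_{\nu(j)}$ yields $\sup_k|S_{n_k}(f^{\ast})(x)|=\infty$ only \emph{almost} everywhere (via Borel--Cantelli on the uniformly dense sets $E_{\nu(j)}$), and the residual null set $A$ is handled by a second function $f_{\ast}=\sum_r w_{\delta(r)}Q_r\in L^2$, where the $Q_r$ are polynomials (taken from \cite{24}) with $\sum_r\|Q_r\|_{L^2}<\infty$ but $\sup_r|Q_r(x)|=\infty$ on $A$. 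The nested-spectrum hypothesis is invoked a second time here, to slot $\Sp(w_{\delta(r)}Q_r)$ into gaps $(n_{\alpha(j(r))},n_{\beta(j(r))}]$ in which $f^{\ast}$ has no Walsh coefficients, so that on $A$ the divergence of $S_{n_k}(f_{\ast})$ along the indices $n_{\alpha(j(r))},n_{\beta(j(r))}$ survives in $f=f^{\ast}+f_{\ast}$.

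Your Step~2 also assumes that the \emph{spatial} supports $E_{\nu_j}$ of the $P_{\nu_j}$ can be made pairwise disjoint. Nothing in the Walsh construction grants that freedom (there is no dyadic translation compatible with the required spectral placement and with the fact that the indices $\delta_j$ must come from $\{n_k-n_\nu\}$), and the paper does not use it. Instead the paper first reduces, via Lemmas~\ref{l:2}--\ref{l:4}, to an $N$-function $\varphi$ satisfying $\Delta_2$, and then proves $f^{\ast}\in\varphi(L)$ by Orlicz duality: for arbitrary $g\in\psi(L)$ one bounds $\int|f^{\ast}g|$ term by term using Young's inequality together with the convexity estimate $\int\varphi(P_{\nu(j)})\le \varphi(2^{2n_{\nu(j)}})\big/2^{2n_{\nu(j)}}$, which needs only $\|P_{\nu(j)}\|_1=1$ and $0\le P_{\nu(j)}\le 2^{2n_{\nu(j)}}$, not any support disjointness. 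In short, both the ``for every $x$'' in your (iii) and the disjoint-support bookkeeping in your Step~2 must be replaced by the paper's two-function patching and the Orlicz-duality argument, respectively.
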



\begin{remark}
\label{r:5} Let $n_{k}=\sum_{j=0}^{k}2^{2j}$ $(k\in \mathbb{N})$. Clearly,
the sequence $(n_{k})$ is lacunary. From Theorem \ref{t:2} it follows that
for every $\varphi \in \Phi $ with $\varphi (u)=o(u\log \log u)$ $%
(u\rightarrow \infty )$ there exists a function $f\in \varphi (L)[0,1)$ such
that $\sup_{k\in \mathbb{N}}|S_{n_{k}}(f)(x)|=\infty $ for every $x\in
\lbrack 0,1).$ Thus, in classes $\varphi (L)[0,1)$ with $\varphi \in \Phi $
and $\varphi (u)=o(u\log \log u)$ $(u\rightarrow \infty )$ it is not
guaranteed the almost everywhere convergence of Walsh-Fourier series along
lacunary sequences.
\end{remark}


\begin{remark}
\label{r:6} Obviously, Theorem \ref{t:2} implies Theorem \ref{t:1}.
\end{remark}

The proof of Theorem \ref{t:2} is based on modifications of methods proposed
by Schipp \cite{8,9} (see also \cite{1}, pp. 295-301) and Konyagin \cite{19}.


\section{Auxiliary propositions}

For every $n\in \mathbb{N}_0$ and $j=1,\dots,2^n$ by $\Delta(n,j)$ denote
the \emph{dyadic interval} $[(j-1)/2^n,j/2^n)$.

Polynomial with respect to the Walsh system we will refer simply as \emph{%
polynomial}.

The \emph{spectrum} of a polynomial $P=\sum_{j=0}^n a_j w_j$ will be denoted
by $\text{Sp}(P)$, i.e. $\text{Sp}(P)=\{j: a_j\neq 0\}$.

The \emph{degree} of a polynomial $P$ will be denoted by $\text{deg}(P)$.


\begin{lemma}
\label{l:1} Let $(n_k)$ be an increasing sequence of natural numbers with
nested spectrums which has unbounded variation. Then for every $\nu\in%
\mathbb{N}$ there exists a polynomial $P_\nu=1+\sum_{j=\nu}^{l_\nu}
a^{(\nu)}_j w_j$ and a set $E_\nu\subset [0,1)$ such that

\begin{enumerate}
\item[$\bullet$] $0\leq P_\nu(x)\leq 2^{2n_\nu}$ for every $x\in [0,1);$

\item[$\bullet $] $|E_{\nu }\cap \Delta (\max $ Sp$(n_{\nu })+1,j)|\geq
|\Delta (\max $ Sp$(n_{\nu })+1,j)|/4$ for every $j=1,\dots ,2^{\max \text{
Sp}(n_{\nu })+1};$

\item[$\bullet$] For every $x\in E_\nu$ there exists a natural number $%
k=k(\nu,x)\geq\nu$ such that $|S_{n_k}(P_\nu)(x)|\geq V(n_\nu)/16$.
\end{enumerate}
\end{lemma}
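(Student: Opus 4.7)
The plan is to adapt the building-block constructions from Schipp's \cite{8,9} divergent Walsh--Fourier examples and Konyagin's \cite{19} refinement to the nested-spectrum setting. First I would exploit the nested-spectrum hypothesis to write $n_k = M_k + n_\nu$ for each $k > \nu$, where $M_k$ is a positive integer divisible by $2^{N_\nu}$ with $N_\nu := \max \text{Sp}(n_\nu) + 1$. Splitting the sum defining $D_{n_k}$ at $j = M_k$ and using $w_{M_k+i} = w_{M_k} w_i$ for $i < 2^{N_\nu}$ gives the identity
\[
D_{n_k}(y) = D_{M_k}(y) + w_{M_k}(y)\, D_{n_\nu}(y), \qquad (\ast)
\]
which in turn shows that for any Walsh polynomial $Q$ with spectrum in $[0, M_k)$ we have $S_{n_k}(Q)(x) = Q(x) + w_{M_k}(x) (Q \ast D_{n_\nu})(x)$. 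So the partial sums $S_{n_k}$ along our sequence naturally produce high-frequency modulated copies of $Q \ast D_{n_\nu}$, and the $L^1$ lower bound $\|D_{n_\nu}\|_1 \geq V(n_\nu)/8$ is what will deliver the desired pointwise size $V(n_\nu)/16$.

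Next I would build $P_\nu$ in two stages. Start with a $\{-1,0,+1\}$-valued Walsh polynomial $\sigma_\nu$ of degree $< 2^{N_\nu}$ (essentially $\sigma_\nu = \text{sgn}(D_{n_\nu})$, piecewise constant on dyadic intervals of length $2^{-N_\nu}$) so that $(\sigma_\nu \ast D_{n_\nu})(0) = \|D_{n_\nu}\|_1$; by a dyadic translation $\sigma_\nu(\cdot \oplus t)$ this peak value can be moved into any prescribed $\Delta(N_\nu, j)$. Then select indices $\nu \leq k_1 < k_2 < \cdots$ within $(n_k)$ so that the modulation frequencies $M_{k_j}$ are pairwise separated by gaps $\geq 2^{N_\nu}$ and $M_{k_1} \geq \nu$ --- both automatic from nested spectrums and unboundedness of variation --- and assemble $P_\nu$ as $1$ plus a carefully weighted superposition of the modulated translates $w_{M_{k_j}}(\cdot)\, \sigma_\nu(\cdot \oplus t_j)$. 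Disjointness of the blocks' spectra (contained in $[M_{k_j}, M_{k_j} + 2^{N_\nu})$) places $P_\nu$ in the required form $1 + \sum_{j=\nu}^{l_\nu} a_j^{(\nu)} w_j$, and the coefficients are chosen to enforce simultaneously $0 \leq P_\nu \leq 2^{2n_\nu}$ and the presence of one ``resonant'' block on each $\Delta(N_\nu, j)$ for a prescribed partial-sum index $k_j$.

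Finally I would take $E_\nu$ to be the union over $j$ of the subsets of $\Delta(N_\nu, j)$ on which the resonant block dominates. At $x \in E_\nu$, applying $(\ast)$ with $k = k_j$ decomposes $S_{n_{k_j}}(P_\nu)(x)$ into blocks with index $< k_j$ (fully preserved, uniformly bounded in absolute value), the resonant block (of size $\sim V(n_\nu)$ since $(\sigma_\nu \ast D_{n_\nu})(x \oplus t_j)$ attains its peak on the dyadic interval $\Delta(N_\nu,j)$), and blocks with index $> k_j$ (filtered out entirely). After absorbing the bounded perturbations this gives $|S_{n_{k_j}}(P_\nu)(x)| \geq V(n_\nu)/16$, and the dyadic-scale construction forces $|E_\nu \cap \Delta(N_\nu, j)| \geq |\Delta(N_\nu, j)|/4$.

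The hard part will be balancing the coefficients of $P_\nu$ so as to enforce positivity and the $L^\infty$ bound simultaneously with the large pointwise partial-sum estimate: the naive ``one block per dyadic interval with equal weights'' choice is defeated by the positivity constraint (the weights must sum to at most $1$, giving only $O(1/L)$ per block for $L = 2^{N_\nu}$, which is much smaller than the required size). Following Schipp and Konyagin, I expect the right construction to involve a hierarchical or recursive choice of blocks exploiting the unboundedness of $V(n_k)$, which supplies arbitrarily long families of suitably separated indices so that the bookkeeping closes.
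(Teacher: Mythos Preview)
Your setup is sound: the identity $(\ast)$ is correct, the choice $\sigma_\nu=\text{sgn}(D_{n_\nu})$ and its dyadic translates $g_j(x)=\sigma_\nu(x\oplus(j-1)2^{-N_\nu})$ are exactly what the paper uses, and you have correctly located the central obstacle in your last paragraph. But the proposal stops precisely where the real work begins, and your ``bounded past $+$ large resonant block $+$ killed future'' decomposition is incompatible with any construction that actually satisfies the positivity constraint.

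The missing idea is the Riesz product. The paper takes
\[
P_\nu \;=\; \prod_{j=1}^{2^{N_\nu}} \bigl(1 + w_{\delta_j}\, g_j\bigr),
\]
which gives $0\le P_\nu\le 2^{2^{N_\nu}}<2^{2n_\nu}$ for free since each factor lies in $[0,2]$. Under the separation $\delta_{j+1}\ge 2(\delta_j+2^M)$ the cross terms in the expansion have spectra squeezed strictly between consecutive blocks $[\delta_j,\delta_j+2^{N_\nu})$, so one still recovers $S_{\delta_j+n_\nu}(P_\nu)-S_{\delta_j-\lambda}(P_\nu)=w_{\delta_j}S_{n_\nu}(g_j)$. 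The price is that the ``previous blocks'' are \emph{not} uniformly bounded---they equal $\prod_{i<j}(1+w_{\delta_i}g_i)$, which can be as large as $2^{j-1}$---so your triage does not go through as stated. The paper handles this by a sign-alignment argument: writing the relevant partial sum as $A+w_{\delta_j}B$ with $A,B$ independent of $\delta_j$ and of degree $<2^{\max\text{Sp}(\delta_j)}$, the highest Rademacher factor in $w_{\delta_j}$ oscillates on a finer scale than $A$ and $B$, forcing $A$ and $w_{\delta_j}B$ to share a sign on at least half of the set where $|B|\ge V(n_\nu)/16$, whence $|A+w_{\delta_j}B|\ge|B|$ there. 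A second subtlety you have not addressed is that one does not know in advance whether the large partial sum sits at index $\delta_j+n_\nu$ or at $\delta_j-\lambda$; the paper allows $\delta_j$ to be drawn from \emph{either} $\{n_k-n_\nu:k>\nu\}$ or $\{n_k-n_\nu+2^M:k>\nu\}$ (with $M$ the least index $\ge N_\nu$ outside $\bigcup_k\text{Sp}(n_k)$), choosing inductively so that whichever alternative occurs, the large partial-sum index is still some $n_k$. Your closing guess that the fix involves ``unboundedness of $V(n_k)$'' is off the mark: the construction of $P_\nu$ uses only the single value $V(n_\nu)$ and the nested-spectrum structure; unboundedness enters only later, in the proof of Theorem~\ref{t:2}.
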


\begin{proof}

By $g$ denote the function defined as follows 
\begin{equation*}
g(x)=\text{sgn}(D_{n_{\nu }}(x))\;\;\;\;(x\in \lbrack 0,1)).
\end{equation*}%
Then we have 
\begin{equation*}
S_{n_{\nu }}(g)(0)=\Vert D_{n_{\nu }}\Vert _{L}\geq V(n_{\nu })/8.
\end{equation*}%
Denote $N=\max \text{Sp}(n_{\nu })+1$. It is clear that $g$ is a polynomial
with $\text{deg}(g)<2^{N}$. Consequently, for every point $x\in \Delta (N,1)$
we have also that 
\begin{equation*}
S_{n_{\nu }}(g)(x)=S_{n_{\nu }}(g)(0)\geq V(n_{\nu })/8.
\end{equation*}%
Denote $g_{j}(x)=g\big(x\oplus \frac{j-1}{2^{N}}\big)$ $(j=1,\dots
,2^{N};x\in \lbrack 0,1))$. Here and below $\oplus $ denotes the operation
of dyadic addition. Obviously, each $g_{j}$ is a polynomial with $\text{deg}%
(g_{j})=\text{deg}(g)<2^{N}<2n_{\nu }$. For every $j=1,\dots ,2^{N}$ and $%
x\in \Delta (N,j)$ we obtain 
\begin{equation}
|S_{n_{\nu }}(g_{j})(x)|=\Big|S_{n_{\nu }}(g)\Big(x\oplus \frac{j-1}{2^{N}}%
\Big)\Big|\geq V(n_{\nu })/8.  \label{2}
\end{equation}%
.

Let us consider a polynomial $Q$ of the following type 
\begin{equation*}
Q=\prod_{j=1}^{2^N}(1+w_{\delta_j} g_j),
\end{equation*}
where natural numbers $\delta_1<\delta_2<\dots<\delta_{2^N}$ will be chosen
later.

Obviously, 
\begin{equation*}
0\leq Q(x)\leq 2^{2^N}<2^{2n_\nu}\;\;\;\;(x\in [0,1)).
\end{equation*}

Denote $R_1=0$ and 
\begin{equation*}
R_j=w_{\delta_j}g_j\prod_{i=1}^{j-1}(1+w_{\delta_i} g_i)-w_{\delta_j}g_j
\;\;\;\;(j=2,\dots, 2^N).
\end{equation*}
Then it is valid the representation 
\begin{equation*}
Q=1+(w_{\delta_1}g_1+R_1) +\dots+ (w_{\delta_{2^N}}g_{2^N}+R_{2^N}).
\end{equation*}

Let $M$ be the minimal natural number with the properties: $M\geq N$ and $%
M\notin \bigcup_{k\in\mathbb{N}}\text{Sp}(n_k)$. Note that such number
exists since $(n_k)$ is a sequence with nested spectrums which has unbounded
variation.

Let us assume that numbers $\delta _{1},\dots ,\delta _{2^{N}}$ satisfy the
following conditions: 
\begin{equation}
\delta _{j}\in \{n_{k}-n_{\nu }:k>\nu \}\cup \{n_{k}-n_{\nu }+2^{M}:k>\nu
\}\;\;\;\;(j=1,\dots ,2^{N});  \label{3}
\end{equation}%
\begin{equation}
\delta _{1}\geq 2^{M}+1\;\;\text{and}\;\;\delta _{j+1}\geq 2(\delta
_{j}+2^{M})\;\;\;\;(j=2,\dots ,2^{N}).  \label{4}
\end{equation}

Below for the briefness we will use the notation $\lambda= 2^M-n_\nu$.

Let $1\leq j\leq 2^{N}$. Assume $g_{j}=\sum_{h=0}^{2^{N}-1}a_{h}w_{h}$. By
virtue of (\ref{4}) the numbers from $\text{Sp}(\delta _{j})$ are not less
then $N$. On the other hand for each $h<2^{N}$ the numbers from $\text{Sp}%
(h) $ are less then $N$. Consequently, 
\begin{equation*}
w_{\delta _{j}}w_{h}=w_{\delta _{j}\oplus h}=w_{\delta
_{j}+h}\;\;\;(h=0,\dots ,2^{N}-1)
\end{equation*}%
and 
\begin{equation*}
w_{\delta _{j}}g_{j}=\sum_{h=0}^{2^{N}-1}a_{h}w_{\delta
_{j}}w_{h}=\sum_{h=0}^{2^{N}-1}a_{h}w_{\delta _{j}+h}.
\end{equation*}%
Therefore, we have 
\begin{equation}
\text{Sp}(w_{\delta _{j}}g_{j})=\delta _{j}+\text{Sp}(g_{j})\subset \lbrack
\delta _{j},\delta _{j}+2^{N});  \label{5}
\end{equation}%
\begin{equation}
S_{\delta _{j}-\lambda }(w_{\delta _{j}}g_{j})=0  \label{6}
\end{equation}%
and 
\begin{equation}
S_{\delta _{j}+n_{\nu }}(w_{\delta _{j}}g_{j})=\sum_{h=0}^{n_{\nu
}-1}a_{h}w_{\delta _{j}+h}=w_{\delta _{j}}\sum_{h=0}^{n_{\nu
}-1}a_{h}w_{h}=w_{\delta _{j}}S_{n_{\nu }}(g_{j}).  \label{7}
\end{equation}

Let us prove that 
\begin{equation}
\text{Sp}(R_{j})\subset (\delta _{j-1}+2^{N},\delta
_{j}-2^{M})\;\;\;\;\;\;(j=2,\dots ,2^{N}).  \label{8}
\end{equation}%
Let us consider a product 
\begin{equation}
(w_{\delta _{j}}w_{h})(w_{\delta _{i(1)}}w_{h(1)})\dots (w_{\delta
_{i(t)}}w_{h(t)}),  \label{9}
\end{equation}%
where $j\geq 2$, $i(1)<\dots <i(t)<j$ and $h,h(1),\dots ,h(t)\leq \text{deg}%
(g)<2^{N}$. By virtue of (\ref{3}) and (\ref{4}) we have 
\begin{equation}
\delta _{j}\oplus \delta _{i(t)}\leq \delta _{j}-\delta _{i(t)}+2^{M}\leq
\delta _{j}-2\delta _{i(t-1)}-2^{M}.  \label{10}
\end{equation}%
From (\ref{4}) we obtain 
\begin{equation*}
\lbrack \delta _{i(1)}\oplus \dots \oplus \delta _{i(t-1)}]\oplus \lbrack
h\oplus h(1)\oplus \dots \oplus h(t)]<
\end{equation*}%
\begin{equation}
<2^{\max \text{Sp}(\delta _{i(t-1)})+1}\leq 2\delta _{i(t-1)},  \label{11}
\end{equation}%
and 
\begin{equation*}
\delta _{j}\oplus \lbrack \delta _{i(1)}\oplus \dots \oplus \delta
_{i(t)}]\oplus \lbrack h\oplus h(1)\oplus \dots \oplus h(t)]\geq
\end{equation*}%
\begin{equation}
\geq 2^{\max \text{Sp}(\delta _{j})}>\delta _{j}/2\geq \delta
_{j-1}+2^{M}\geq \delta _{j-1}+2^{N}.  \label{12}
\end{equation}%
Taking into account that $R_{j}$ is a linear combination of products of the
type (\ref{9}) and combining (\ref{10})-(\ref{12}) we obtain the inclusion (%
\ref{8}).

From (\ref{4}), (\ref{5}) and (\ref{8}) we conclude that the polynomial $Q$
is of the type $1+\sum_{j=\nu }^{l_{\nu }}a_{j}w_{j}$.

By virtue of (\ref{5})-(\ref{8}) for every $j$ we deduce 
\begin{equation*}
S_{\delta _{j}+n_{\nu }}(Q)-S_{\delta _{j}-\lambda }(Q)=w_{\delta
_{j}}S_{n_{\nu }}(g_{j}).
\end{equation*}%
Consequently, by virtue of (\ref{2}) for every $j$ and $x\in \Delta (N,j)$
we obtain 
\begin{equation*}
|S_{\delta _{j}+n_{\nu }}(Q)(x)-S_{\delta _{j}-\lambda }(Q)(x)|=|w_{\delta
_{j}}(x)S_{n_{\nu }}(g_{j})(x)|=
\end{equation*}%
\begin{equation}
=|S_{n_{\nu }}(g_{j})(x)|\geq V(n_{\nu })/8.  \label{13}
\end{equation}

(\ref{13}) implies that for each $j$ either 
\begin{equation}
|\{x\in \Delta (N,j):|S_{\delta _{j}+n_{\nu }}(Q)(x)|\geq V(n_{\nu
})/16\}|\geq |\Delta (N,j)|/2  \label{14}
\end{equation}%
or 
\begin{equation}
|\{x\in \Delta (N,j):|S_{\delta _{j}-\lambda }(Q)(x)|\geq V(n_{\nu
})/16\}|\geq |\Delta (N,j)|/2,  \label{15}
\end{equation}%
but till now we do not know exactly which between (\ref{14}) and (\ref{15})
is valid. We can make it clear by a new step of choosing of numbers $\delta
_{1},\dots ,\delta _{2^{N}}$. Namely, the new step for each $j$ will define
from which one between the sets $\{n_{k}-n_{\nu }:k>\nu \}$ and $%
\{n_{k}-n_{\nu }+2^{M}:k>\nu \}$ must be chosen a number $\delta _{j}$.

Let $\delta _{1}$ be an arbitrary number of the type $n_{k}-n_{\nu }$ where $%
k>\nu $ and such that $\delta _{1}\geq 2^{M}+1$. Suppose $\delta _{1},\dots
,\delta _{j-1}$ are already chosen. Taking into account $\,$(\ref{5}) and (%
\ref{8}) we write 
\begin{equation*}
S_{\delta _{j}-\lambda }(Q)=(1+w_{\delta _{1}}g_{1}+R_{1}+\dots +w_{\delta
_{j-1}}g_{j-1}+R_{j-1})+R_{j}=
\end{equation*}%
\begin{equation*}
=(1+w_{\delta _{1}}g_{1}+R_{1}+\dots +w_{\delta
_{j-1}}g_{j-1}+R_{j-1})+w_{\delta _{j}}R_{j}^{\ast },
\end{equation*}%
where $R_{j}^{\ast }=R_{j}/w_{\delta
_{j}}=g_{j}\prod_{i=1}^{j-1}(1+w_{\delta _{i}}g_{i})-g_{j}$. On the other
hand, using (\ref{5}), (\ref{7}) and (\ref{8}) we write 
\begin{equation*}
S_{\delta _{j}+n_{\nu }}(Q)=(1+w_{\delta _{1}}g_{1}+R_{1}+\dots +w_{\delta
_{j-1}}g_{j-1}+R_{j-1})+w_{\delta _{j}}R_{j}^{\ast }+w_{\delta
_{j}}S_{n_{\nu }}(g_{j}).
\end{equation*}%
Consequently, for every $x\in \Delta (N,j)$ we have 
\begin{equation*}
|S_{\delta _{j}+n_{\nu }}(Q)(x)-S_{\delta _{j}-\lambda
}(Q)(x)|=|(R_{j}^{\ast }+S_{n_{\nu }}(g_{j}))(x)-R_{j}^{\ast }(x)|\geq
V(n_{\nu })/8.
\end{equation*}%
It implies that 
\begin{equation}
|\{x\in \Delta (N,j):|(R_{j}^{\ast }+S_{n_{\nu }}(g_{j}))(x)|\geq V(n_{\nu
})/16\}|\geq |\Delta (N,j)|/2  \label{16}
\end{equation}%
or 
\begin{equation}
|\{x\in \Delta (N,j):|R_{j}^{\ast }(x)|\geq V(n_{\nu })/16\}|\geq |\Delta
(N,j)|/2.  \label{17}
\end{equation}

Note that from (\ref{4}) it follows easily the estimations 
\begin{equation*}
\deg (1+w_{\delta _{1}}g_{1}+R_{1}+\dots +w_{\delta _{j-1}}g_{j-1}+R_{j-1})<
\end{equation*}%
\begin{equation}
<2^{\max \text{Sp}(\delta _{j-1})+1}\leq 2^{\max \text{Sp}(\delta _{j})}.
\label{18}
\end{equation}

Suppose that (\ref{16}) is valid. From (\ref{18}) we conclude easily that
the numbers $(1+w_{\delta _{1}}g_{1}+R_{1}+\dots +w_{\delta
_{j-1}}g_{j-1}+R_{j-1})(x)$ and $w_{\delta _{j}}(x)(R_{j}^{\ast }+S_{n_{\nu
}}(g_{j}))(x)$ are of one and the same sign for points $x$ from a set $%
E_{\nu ,j}\subset \Delta (N,j)$ with $|E_{\nu ,j}|\geq |\Delta (N,j)|/4$.
Consequently, for every $x\in E_{\nu ,j}$ we have 
\begin{equation*}
|S_{\delta _{j}+n_{\nu }}(Q)(x)|\geq V(n_{\nu })/16.
\end{equation*}

The case, when (\ref{16}) is valid, is analogous to the considered one. In
this case we can guarantee that 
\begin{equation*}
|S_{\delta _{j}-\lambda }(Q)(x)|\geq V(n_{\nu })/16,
\end{equation*}%
for points $x$ from a set $E_{\nu ,j}\subset \Delta (N,j)$ with $|E_{\nu
,j}|\geq |\Delta (N,j)|/4$.

Now let us choose $\delta _{j}$ from the set $\{n_{k}-n_{\nu }:k>\nu \}$ if $%
(16)$ is fulfilled and from the set $\{n_{k}-n_{\nu }+2^{M}:k>\nu \}$ if (%
\ref{17}) is fulfilled. This finishes the process of choosing of numbers $%
\delta _{j}$.

Clearly, the polynomial $P_\nu=Q$ and the set $E_\nu=\bigcup_{j=1}^{2^N}E_{%
\nu,j}$ satisfy the needed conditions. The lemma is proved.
\end{proof}


\begin{remark}
\label{r:7} It is easy to see that the numbers $n_{k}$ from the third
condition of Lemma \ref{l:1} can be assumed to be not greater than $\min
\{n_{j}:j\in \mathbb{N},n_{j}\geq \text{deg}(P_{\nu })\}$.
\end{remark}

A function $\varphi:[0,\infty)\rightarrow [0,\infty)$ is said to satisfy $%
\Delta_2$-\emph{condition} if there are positive numbers $c$ and $u_0$ such
that $\varphi(2u)\leq c\varphi(u)$ when $u\geq u_0$.


\begin{lemma}
\label{l:2} Let $(n_{k})$ be an increasing sequence of natural numbers with
nested spectrums which has unbounded variation. Then the function $\varphi
_{(n_{k})}$ is convex, $\lim\limits_{u\rightarrow \infty }\varphi
_{(n_{k})}(u)/u=\infty $ and $\varphi _{(n_{k})}$ satisfies $\Delta _{2}$%
-condition.
\end{lemma}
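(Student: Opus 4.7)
The plan is to verify the three claims using the piecewise-linear structure of $\varphi_{(n_k)}$, whose nodes $u_\nu:=2^{2n_\nu}$ carry prescribed values $u_\nu V(n_\nu)$; every claim reduces to controlling how $V(n_\nu)$ evolves. The main preliminary I would establish is that $V(n_\nu)$ is non-decreasing in $\nu$ and hence tends to $\infty$. Interpreting $V(n)/2$ as the number of maximal blocks of consecutive $1$'s in the binary expansion of $n$, the nested-spectrum hypothesis forces $n_{\nu+1}$ to agree with $n_\nu$ on every bit of position $\le\max\text{Sp}(n_\nu)$ and places any additional $1$'s strictly above, so no existing block is destroyed: either a new bit at position $\max\text{Sp}(n_\nu)+1$ extends the top block, or new bits higher up create additional blocks. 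Combined with unboundedness of $V(n_k)$ this yields $V(n_\nu)\to\infty$.

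Given this monotonicity, the limit $\lim_{u\to\infty}\varphi_{(n_k)}(u)/u=\infty$ is immediate. Denoting by $s_{\nu+1}$ the slope of $\varphi_{(n_k)}$ on $[u_\nu,u_{\nu+1}]$, a one-line algebraic check gives $s_{\nu+1}\ge V(n_\nu)$, so the identity
\[
\varphi_{(n_k)}(u)/u=s_{\nu+1}+u_\nu\bigl(V(n_\nu)-s_{\nu+1}\bigr)/u,\qquad u\in[u_\nu,u_{\nu+1}],
\]
is monotone non-decreasing in $u$ with left-hand value $V(n_\nu)$, and $V(n_\nu)\to\infty$ finishes the limit claim. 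For the $\Delta_2$-condition, the spacing $u_{\nu+1}/u_\nu\ge 4$ (an immediate consequence of nested spectrum adding at least one new binary position) confines $2u$ to at most the next linear piece; combining the lower bound $\varphi_{(n_k)}(u)\ge u\,V(n_\nu)$ just obtained with the global upper bound $\varphi_{(n_k)}(u)\le Cu\log\log u$ from the introduction gives $\varphi_{(n_k)}(2u)/\varphi_{(n_k)}(u)\le C'\log\log(2u)/V(n_\nu)$, whose right-hand side stays bounded because $V(n_\nu)$ is comparable to the $\log\log$-growth governing the upper bound.

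The main obstacle is convexity, i.e.\ showing $s_\nu\le s_{\nu+1}$ for every $\nu$. An algebraic rearrangement of the slope definition reduces convexity at node $u_\nu$ to
\[
u_{\nu+1}(u_\nu-u_{\nu-1})\,\bigl[V(n_{\nu+1})-V(n_\nu)\bigr]\;\ge\;u_{\nu-1}(u_{\nu+1}-u_\nu)\,\bigl[V(n_\nu)-V(n_{\nu-1})\bigr].
\]
The right-hand side can be positive while the left vanishes (whenever $V$ plateaus at step $\nu+1$), so monotonicity of $V$ alone does not suffice. My plan is to exploit the asymmetric geometric gap inherent in the nested-spectrum structure: the coefficient $u_{\nu+1}(u_\nu-u_{\nu-1})$ dwarfs $u_{\nu-1}(u_{\nu+1}-u_\nu)$ by a factor on the order of $u_{\nu+1}/u_{\nu-1}$, which is exponential in $n_{\nu+1}-n_{\nu-1}$, while the $V$-differences grow at most linearly by the global bound $V(n)\le C\log(2n)$. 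For the remaining plateau case I would observe that if $V(n_\nu)=\cdots=V(n_{\nu+m})$ then all these nodes $(u_j,u_j V(n_\nu))$ lie on the line $y=V(n_\nu)u$, so the graph of $\varphi_{(n_k)}$ reduces to a single line segment across the plateau and convexity need only be checked at the indices where $V$ strictly increases, where the exponential-versus-logarithmic gap closes the inequality.
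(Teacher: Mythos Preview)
Your convexity argument has a genuine gap in the plateau case. You are right that within a run $V(n_\nu)=\cdots=V(n_{\nu+m})$ the nodes lie on the line $y=V(n_\nu)u$, but this does not make convexity automatic at the \emph{entrance} to the plateau. If $V(n_{\nu-1})<V(n_\nu)=V(n_{\nu+1})$, the slope on $[u_{\nu-1},u_\nu]$ equals
\[
V(n_{\nu-1})+\frac{u_\nu\bigl(V(n_\nu)-V(n_{\nu-1})\bigr)}{u_\nu-u_{\nu-1}}\;>\;V(n_\nu),
\]
while the slope on $[u_\nu,u_{\nu+1}]$ is exactly $V(n_\nu)$; the slope therefore \emph{decreases} at $u_\nu$, and your displayed inequality fails there because its left side vanishes while its right side is positive. (Concretely, $n_1=1$, $n_2=5$, $n_3=13$ is nested with $V$-values $2,4,4$, and the slopes are $\approx 4.008$ then $4$.) Your ``exponential-versus-logarithmic gap'' cannot rescue this, since no gap compensates for a zero left-hand side. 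The paper's route is different: it asserts outright that $\delta_\nu:=V(n_{\nu+1})-V(n_\nu)\ge 1$ for every $\nu$, which would exclude plateaus, and then checks the slope inequality via the clean representation $t_\nu=V(n_\nu)+u_{\nu+1}\delta_\nu/(u_{\nu+1}-u_\nu)$ together with the key structural bound $n_{\nu+1}\ge n_\nu+\delta_\nu$. You should confront that claim directly, since your own block-counting analysis (extending the top block keeps the block count fixed) shows that $\delta_\nu=0$ is in fact possible under the stated hypotheses.

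Your $\Delta_2$ argument also has a gap. The inequality $\varphi_{(n_k)}(2u)/\varphi_{(n_k)}(u)\le C'\log\log(2u)/V(n_\nu)$ is fine, but the final step---``$V(n_\nu)$ is comparable to the $\log\log$-growth governing the upper bound''---is unjustified: the introduction gives only the \emph{upper} bound $\varphi_{(n_k)}(u)\le Cu\log\log u$ for general nested-spectrum sequences, and the matching lower bound is stated solely for the special example $n_k=\sum_{j=0}^k 2^{2j}$. In general $V(n_\nu)$ may grow arbitrarily slowly compared with $\log n_\nu\approx\log\log u_\nu$ (take long blocks of consecutive $1$'s), so your ratio need not stay bounded. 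The paper instead argues $\Delta_2$ by a direct doubling check $\varphi_{(n_k)}(2^{m+1})\le 2\,\varphi_{(n_k)}(2^m)$ using only the piecewise-linear structure, avoiding any growth comparison; that line of argument is what you should try to make precise.
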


\begin{proof}

Note that for a convex function $\varphi:[0,\infty)\rightarrow [0,\infty)$
the conditions $\lim\limits_{u\rightarrow \infty}\varphi(u)/u=\infty$ and $%
\lim\limits_{u\rightarrow \infty}D_r(\varphi)(u)=\infty$ are equivalent.
Here and below $D_r(\varphi)(x)$ denotes the right derivative of a function $%
\varphi$ at a point $x$.

Denote $I_0=[0, 2^{2n_1})$ and $I_\nu=[2^{2n_{\nu}},2^{2n_{\nu+1}})$ $(\nu\in%
\mathbb{N})$. By $t_\nu$ $(\nu\in\mathbb{N}_0)$ denote the slope of the
graph of the function $\varphi_{(n_k)}$ on the interval $I_\nu$, i.e., $%
t_0=V(n_1)$ and 
\begin{equation*}
t_\nu=\frac{2^{2n_{\nu+1}}V(n_{\nu+1})-2^{2n_{\nu}}V(n_{\nu})} {%
2^{2n_{\nu+1}}-2^{2n_{\nu}}}\;\;\;\;\;(\nu\in\mathbb{N}).
\end{equation*}

Let us prove that $(t_\nu)$ is an increasing sequence which tends to
infinity as $\nu\rightarrow\infty$. It will imply that $\lim\limits_{u%
\rightarrow \infty}D_r(\varphi_{(n_k)})(u)=\infty$ and that $\varphi_{(n_k)}$
is a convex function.

Denote $\delta_\nu=V(n_{\nu+1})-V(n_{\nu})$ $(\nu\in\mathbb{N})$. From the
properties of the sequence $(n_k)$ it follows that $\delta_\nu\geq 1$ $%
(\nu\in\mathbb{N})$.

For every $\nu \in \mathbb{N}$ we have 
\begin{equation}
t_{\nu }=V(n_{\nu })+\frac{2^{2n_{\nu +1}}\delta _{\nu }}{2^{2n_{\nu
+1}}-2^{2n_{\nu }}}.  \label{19}
\end{equation}%
Consequently, $t_{\nu }\rightarrow \infty $ as $\nu \rightarrow \infty $.
From the representation (\ref{19}) it follows directly that $t_{0}<t_{1}$.
Let us prove that $t_{\nu }<t_{\nu +1}$ for every $\nu \in \mathbb{N}$,
i.e., we must check the estimation 
\begin{equation}
V(n_{\nu })+\frac{2^{2n_{\nu +1}}\delta _{\nu }}{2^{2n_{\nu +1}}-2^{2n_{\nu
}}}<V(n_{\nu +1})+\frac{2^{2n_{\nu +2}}\delta _{\nu +1}}{2^{2n_{\nu
+2}}-2^{2n_{\nu +1}}}.  \label{20}
\end{equation}%
Let us rewrite (\ref{20}) as follows 
\begin{equation}
\bigg(\frac{2^{2n_{\nu +1}}}{2^{2n_{\nu +1}}-2^{2n_{\nu }}}-1\bigg)\delta
_{\nu }<\frac{2^{2n_{\nu +2}}\delta _{\nu +1}}{2^{2n_{\nu +2}}-2^{2n_{\nu
+1}}}  \label{21}
\end{equation}%
From the condition $\text{Sp}(n_{\nu +1})\cap \lbrack 0,\max \text{Sp}%
(n_{\nu })]=\text{Sp}(n_{\nu })$ it follows easily that $n_{\nu +1}\geq
n_{\nu }+2^{\max \text{Sp}(n_{\nu })+\delta _{\nu }}\geq n_{\nu }+\delta
_{\nu }$. Consequently, 
\begin{equation*}
\bigg(\frac{2^{2n_{\nu +1}}}{2^{2n_{\nu +1}}-2^{2n_{\nu }}}-1\bigg)\delta
_{\nu }\leq \frac{\delta _{\nu }}{2^{2\delta _{\nu }}-1}<1.
\end{equation*}%
Now taking into account that right part of (\ref{21}) is greater than 1, we
conclude the validity of the needed estimation.

Since $\varphi_{(n_k)}$ is linear on each interval $I_\nu$, then we easily
can check the validity of the estimation $\varphi_{(n_k)}(2^{m+1})\leq
2\varphi_{(n_k)}(2^{m})$ for every $m\in\mathbb{N}$. Hence, by monotonicity
of $\varphi_{(n_k)}$ we conclude that $\varphi_{(n_k)}$ satisfies $\Delta_2$%
-condition. The lemma is proved.
\end{proof}


\begin{lemma}
\label{l:3} Let $\alpha :[0,\infty )\rightarrow \lbrack 0,\infty )$ be an
increasing convex function with $\lim\limits_{u\rightarrow \infty
}\alpha(u)/u=\infty $ and let $\alpha $ satisfies $\Delta _{2}$-condition.
Then for every function $\beta :[0,\infty )\rightarrow \lbrack 0,\infty )$
with $\beta (u)=o(\alpha (u))$ $(u\rightarrow \infty )$ there exists a
function $\gamma :[0,\infty )\rightarrow \lbrack 0,\infty )$ such that: $1)$ 
$\gamma (0)=0$ and $\gamma $ is an increasing convex function; $2) $ $%
\lim\limits_{u\rightarrow \infty }\gamma(u)/u=\infty $; $3)$ $\gamma $
satisfies $\Delta _{2}$-condition; $4)$ there exists $u_{0}>0$ such that $%
\gamma (u)\geq \beta (u)$ when $u\geq u_{0}$; $5)$ $\gamma (u)=o(\alpha (u))$
$(u\rightarrow \infty )$.
\end{lemma}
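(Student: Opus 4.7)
The plan is to construct $\gamma$ explicitly as a piecewise linear convex function on a geometric grid $u_k := 2^k u_0$, with vertex values chosen to lie in a carefully controlled gap between $\beta$ and $\alpha$. Since $\beta = o(\alpha)$, the envelope $\eta(u) := \sup_{t\geq u}\beta(t)/\alpha(t)$ is finite for large $u$, non-increasing, and tends to $0$; I will fix $u_0$ large enough that $\eta(u_0)<1$, that $\alpha(u)/u\geq 1$ on $[u_0,\infty)$, and that the $\Delta_2$-bound $\alpha(2u)\leq c\alpha(u)$ holds there for some $c\geq 2$. Then I assign vertex values by a geometric mean,
\[
\tau_k := \max\bigl(\sqrt{\eta(u_k)},\ \sqrt{u_k/\alpha(u_k)}\bigr),\qquad \gamma(u_k):=\tau_k\alpha(u_k),
\]
with $\gamma(0):=0$ and linear interpolation on each $[u_k,u_{k+1}]$.

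This choice is designed so that every required property falls into place. First, $\tau_k\to 0$ since both entries do, so $\gamma(u_k)/\alpha(u_k)=\tau_k\to 0$; together with monotonicity of $\gamma$ and $\alpha(u_{k+1})\leq c\alpha(u_k)$ this gives the pointwise estimate $\gamma=o(\alpha)$. Second, $\tau_k\alpha(u_k)/u_k\geq \sqrt{\alpha(u_k)/u_k}\to\infty$ since $\alpha(u)/u\to\infty$, which transfers to $\gamma(u)/u\to\infty$ by convexity. Third, for $k$ large enough that $c\tau_k\leq 1$, on $[u_k,u_{k+1}]$ one has $\beta(u)\leq\eta(u_k)\alpha(u)\leq c\eta(u_k)\alpha(u_k)\leq \tau_k\alpha(u_k)\leq \gamma(u)$, so $\gamma\geq\beta$ eventually. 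Fourth, to secure the $\Delta_2$-condition and strict convexity I will pre-smooth the sequences so that consecutive ratios stay bounded: replace $\eta(u_k)$ by $\tilde\eta_k$ defined recursively via $\tilde\eta_k:=\max(\eta(u_k),\tilde\eta_{k-1}/4)$, which preserves $\tilde\eta_k\to 0$ while forcing $\tilde\eta_{k-1}/\tilde\eta_k\leq 4$, and perform an analogous smoothing on $(\tau_k)$. With $\tau_k/\tau_{k+2}$ bounded and $\alpha(u_{k+2})\leq c^2\alpha(u_k)$, the inequality $\gamma(2u)/\gamma(u)$ bounded on each $[u_k,u_{k+1}]$ follows; convexity is enforced by thinning the grid to the subsequence of indices where the slope strictly increases, and the thinned grid remains infinite because $\tau_k\alpha(u_k)\to\infty$.

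The main obstacle is the three-way balance between conditions (2), (4) and (5) of the lemma: the lower bound $\gamma\geq\beta$ and the superlinearity $\gamma/u\to\infty$ push the vertex values upward, while $\gamma=o(\alpha)$ pushes them downward. The geometric-mean definition of $\tau_k$ is precisely what makes these simultaneously achievable, exploiting that $\eta(u_k)\to 0$ comes from $\beta=o(\alpha)$ whereas $u_k/\alpha(u_k)\to 0$ comes from $\alpha(u)/u\to\infty$; neither factor alone would suffice. Once the vertex values are correctly placed, the remaining smoothing and grid-thinning needed to deliver convexity and the $\Delta_2$-condition are mechanical, and use only standard facts about convex functions and no structural property of $\alpha$ beyond those in the hypothesis.
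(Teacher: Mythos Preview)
Your geometric-mean idea for the vertex values is sound and correctly balances the competing constraints (2), (4), (5) at the grid points. However, the ``mechanical'' post-processing you invoke to obtain convexity is where the plan breaks down. Taking the lower convex envelope (your ``thinning'') \emph{lowers} the function at the removed vertices, and nothing in your setup prevents the retained vertices $u_i<u_j$ from being far apart. On such a long gap the linear interpolant satisfies only $\gamma(u)\geq\gamma(u_i)=\tilde\tau_i\alpha(u_i)$, while for $u$ near $u_j$ you need $\gamma(u)\geq\beta(u)$, and $\beta(u)$ can be as large as $\eta(u_i)\alpha(u_j)\leq c^{j-i}\eta(u_i)\alpha(u_i)$; since your only control is $\tilde\tau_i\geq\sqrt{\eta(u_i)}$, you would need $c^{j-i}\sqrt{\eta(u_i)}\leq 1$, which fails whenever the gap $j-i$ is large relative to the decay of $\eta$. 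Your smoothing $\tilde\tau_k\geq\tilde\tau_{k-1}/4$ gives only $\hat\gamma(u_{k+1})/\hat\gamma(u_k)\geq(2-\varepsilon)/4<1$, so $\hat\gamma$ need not even be monotone, let alone convex; and no smoothing with factor $<1$ can force the ratio past the threshold $2$ that makes convexity automatic on a dyadic grid (since $\tilde\tau_k$ is non-increasing while $\alpha(u_{k+1})/\alpha(u_k)$ can be arbitrarily close to $2$). So the interaction between thinning and the lower bound $\gamma\geq\beta$ is a genuine obstacle, not a routine detail.

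The paper sidesteps this entirely by building $\gamma$ from rescaled copies of $\alpha$ itself rather than from a piecewise-linear interpolant. One sets $\alpha_j=\alpha/2^j$, chooses $u_j\uparrow\infty$ with $\beta\leq\alpha_j$ on $[u_j,\infty)$ and a tangent-line compatibility condition, and then defines $\gamma$ to equal $2\alpha_j$ on an initial part of $[u_j,u_{j+1}]$ and to be the tangent line to $2\alpha_j$ on the rest, landing exactly at $2\alpha_{j+1}(u_{j+1})$. Because each piece is either a convex arc of $\alpha$ or a supporting line of one, convexity is automatic with no thinning; and the two-sided estimate $2^{-j}\leq\gamma(u)/\alpha(u)\leq 2^{1-j}$ on $[u_j,u_{j+1}]$ immediately yields $\gamma\geq\beta$, $\gamma=o(\alpha)$, and the $\Delta_2$-condition simultaneously. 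If you want to rescue your construction, the cleanest route is to imitate this: replace your piecewise-linear interpolant by pieces of the form $\tilde\tau_k\alpha$ on each $[u_k,u_{k+1}]$, which inherits convexity from $\alpha$ and removes the need for any thinning.
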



\begin{remark}
\label{r:8} Lemma \ref{l:3} for the case $\alpha (u)=u\log ^{+}\log ^{+}u$
was proved in \cite{19} (see Lemma 3).
\end{remark}

\begin{proof}
Let us proceed the scheme used in \cite{19} (see the proof of Lemma 3 in 
\cite{19}).

Denote by $\alpha_j$ $(j\in\mathbb{N})$ the function $\alpha/2^j$. Taking
into account the properties of the functions $\alpha$ and $\beta$, it is
possible to choose an increasing sequence of positive numbers $(u_j)$ so
that: 
\begin{equation*}
\beta(u)\leq \alpha_j(u)\;\;\;\;(j\in\mathbb{N}, u\geq u_j);
\end{equation*}
\begin{equation*}
\alpha_j(u_j)+
D_r(\alpha_j)(u_j)(u_{j+1}-u_j)\leq\alpha_{j+1}(u_{j+1})\;\;\;\;(j\in\mathbb{%
N}).
\end{equation*}
Further for every $j\in\mathbb{N}$ we can choose numbers $v_j\in [u_j,
u_{j+1})$ and $M_j\in [D_r(\alpha_j)(v_j-0),D_r(\alpha_j)(v_j)] $ for which 
\begin{equation*}
\alpha_j(v_j)+ M_j(u_{j+1}-v_j)=\alpha_{j+1}(u_{j+1}).
\end{equation*}

Let us define a function $\gamma$ as follows: $\gamma(u)=2u\alpha_1(u_1)/u_1$
$(0\leq u< u_1)$; $\gamma(u)=2\alpha_j(u)$ $(j\in\mathbb{N}, u_j\leq u< v_j)$%
; and $\gamma(u)=2(\alpha_j(v_j)+ M_j(u-v_j))$ $(j\in\mathbb{N}, v_j\leq u<
u_{j+1})$. It is easy to see that $\gamma$ is an increasing convex function
and $\lim\limits_{u\rightarrow \infty}D_r(\gamma)(u)=\infty$. The rest three
properties of $\gamma$ follow from the estimations 
\begin{equation*}
\frac{1}{2^j}\leq \frac{\gamma(u)}{\alpha(u)} \leq\frac{1}{2^{j-1}}%
\;\;\;\;\;(j\in\mathbb{N}, u_j\leq u<u_{j+1}).
\end{equation*}
The lemma is proved.
\end{proof}

A function $\varphi:[0,\infty)\rightarrow [0,\infty)$ is called $N$-\emph{%
function} if $\varphi(0)=0$, $\varphi$ is increasing and convex, $%
\lim\limits_{u\rightarrow 0+} \varphi(u)/u=0$ and $\lim\limits_{u\rightarrow
\infty} \varphi(u)/u=\infty$.

Let us call functions $\alpha:[0,\infty)\rightarrow [0,\infty)$ and $%
\beta:[0,\infty)\rightarrow [0,\infty)$ \emph{equivalent} if there exist
positive numbers $u_0, c$ and $C$ such that $c \beta(u)\leq\alpha(u)\leq
C\beta(u)$ when $u\geq u_0$.


\begin{lemma}
\label{l:4} Let $\alpha :[0,\infty )\rightarrow \lbrack 0,\infty )$ be an
increasing convex function such that $\lim\limits_{u\rightarrow \infty
}\alpha (u)/u=\infty $. Then there exists an $N$-function $\beta $ which is
equivalent to $\alpha $.
\end{lemma}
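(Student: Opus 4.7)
The plan is to construct $\beta$ by subtracting from $\alpha$ the affine function representing the ``initial linear part'' of $\alpha$ at the origin. This will zero out both the value and the right derivative at $0$, producing the two conditions $\beta(0)=0$ and $\lim_{u\to 0^+}\beta(u)/u=0$ which $\alpha$ itself may lack, while leaving the behavior at infinity unchanged up to a linear correction that the equivalence relation absorbs.

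Concretely, let $m:=D_r(\alpha)(0)=\lim_{h\to 0^+}(\alpha(h)-\alpha(0))/h$. Since $\alpha$ is increasing and convex on $[0,\infty)$, the difference quotient is non-negative and (by monotonicity of difference quotients for convex functions) bounded above by $\alpha(1)-\alpha(0)$ for $h\leq 1$, so $m\in[0,\infty)$. Then I would define
\[
\beta(u) := \alpha(u) - \alpha(0) - m u \qquad (u\geq 0).
\]

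The verification that $\beta$ is an $N$-function is routine. We have $\beta(0)=0$ by construction; $\beta$ is convex as the difference of $\alpha$ and an affine function; and $D_r(\beta)(u)=D_r(\alpha)(u)-m\geq 0$ for every $u\geq 0$ by monotonicity of $D_r(\alpha)$, whence $\beta$ is non-decreasing and in particular non-negative. The right derivative of $\beta$ at $0$ equals $m-m=0$, which together with convexity forces $\beta(u)/u\to 0$ as $u\to 0^+$. Finally, from $\beta(u)/u = \alpha(u)/u - m - \alpha(0)/u$ and the hypothesis $\alpha(u)/u\to\infty$ we get $\beta(u)/u\to\infty$ as $u\to\infty$.

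For the equivalence, the identity $\alpha=\beta + mu + \alpha(0)$ gives $\alpha\geq\beta$. In the other direction, since $\beta(u)/u\to\infty$ we may choose $u_0$ so large that $\beta(u)\geq (m+1)u+\alpha(0)$ for $u\geq u_0$, and then $\alpha(u)=\beta(u)+mu+\alpha(0)\leq 2\beta(u)$ on this range. Hence $\beta\leq\alpha\leq 2\beta$ on $[u_0,\infty)$ and $\beta$ is equivalent to $\alpha$. There is no real obstacle in the argument; the only point requiring attention is the finiteness of $m$, which is guaranteed by the convexity of $\alpha$ together with its finite values on $[0,\infty)$.
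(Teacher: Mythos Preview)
Your approach differs from the paper's: instead of subtracting the tangent line at $0$, the paper replaces $\alpha$ on $[0,1]$ by the parabola $\varepsilon u^2$ (with $\varepsilon>0$ small enough that $2\varepsilon\le D_r(\alpha)(1)$, so convexity is preserved at the junction) and shifts the remaining part of $\alpha$ down to match, setting $\alpha_\varepsilon(u)=\alpha(u)-\alpha(1)+\varepsilon$ for $u>1$. Both constructions aim to kill the value and the slope at the origin while leaving the large-$u$ behavior intact, so the equivalence verification is the same in spirit.

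There is, however, a gap in your construction. An $N$-function is required to be \emph{increasing} in the strict sense (this is the standard convention in the Krasnosel'ski\u{\i}--Ruticki\u{\i} theory the paper invokes, and it is needed so that the Young conjugate $\psi$ used in the proof of Theorem~\ref{t:2} is everywhere finite). Your $\beta$ need not have this property: if $\alpha$ happens to be affine on an initial segment $[0,a]$ --- say $\alpha(u)=\alpha(0)+mu$ there, which is entirely consistent with the hypotheses --- then $\beta(u)=\alpha(u)-\alpha(0)-mu\equiv 0$ on $[0,a]$, so $\beta$ is neither strictly increasing nor positive for small $u>0$. (A concrete instance: $\alpha(u)=u$ on $[0,1]$, $\alpha(u)=u^2$ on $[1,\infty)$.) The paper's parabola avoids this issue by force, since $\varepsilon u^2$ is strictly increasing and strictly convex on $[0,1]$. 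Your argument is easily repaired by the same device --- graft a small quadratic piece near the origin onto your $\beta$ --- but as written it does not cover all admissible $\alpha$.
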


\begin{proof}

For every $\varepsilon>0$ by $\alpha_\varepsilon$ denote the function on $%
[0,\infty)$ defined as follows: $\alpha_\varepsilon(u)=\varepsilon u^2$ when 
$0\leq u\leq 1$ and $\alpha_\varepsilon(u)=\alpha(u)- \alpha(1)+\varepsilon$
when $u>1$. It is easy to see that for any small enough $\varepsilon$ the
function $\beta=\alpha_\varepsilon$ satisfy the needed conditions. The lemma
is proved.
\end{proof}


\section{Proof of Theorem \protect\ref{t:2}}

By Lemmas \ref{l:2}, \ref{l:3} and \ref{l:4} we can assume that $\varphi $
is an $N$-function satisfying $\Delta _{2}$-condition.

Taking into account that $\lim\limits_{\nu \rightarrow \infty }V(n_{\nu
})=\infty $ and $\varphi (u)=o(\varphi _{(n_{k})}(u))$ $(u\rightarrow \infty
)$, we can choose the sequences $(M_{j})$ and $(n_{\nu (j)})$ with the
properties: 
\begin{equation}
M_{j}>0\;\;(j\in \mathbb{N})\;\;\;\text{and}\;\;\;\lim_{j\rightarrow \infty
}M_{j}=\infty ;  \label{22}
\end{equation}%
\begin{equation}
\sum_{j=1}^{\infty }\frac{M_{j}}{V(n_{\nu (j)})}\frac{\varphi (2^{n_{\nu
(j)}})}{2^{n_{\nu (j)}}}<\infty .  \label{23}
\end{equation}

Obviously, (\ref{23}) implies that $\sum_{j=1}^{\infty }M_{j}/V(n_{\nu
(j)})<\infty $. Let us consider a non-negative function $f^{\ast }$ defined
as follows: 
\begin{equation*}
f^{\ast }(x)=\sum_{j=1}^{\infty }\frac{M_{j}}{V(n_{\nu (j)})}P_{\nu
(j)}(x)\;\;\;\;(x\in \lbrack 0,1)),
\end{equation*}%
where $P_{\nu }$ $(\nu \in \mathbb{N})$ are polynomials from the Lemma \ref%
{l:1}. Let us show that $f^{\ast }\in \varphi (L)[0,1)$. Denote by $\psi $
the function conjugate to $\varphi $ in Young sense. Suppose, $g$ is an
arbitrary function from the class $\psi (L)[0,1)$. Using Young inequality we
have 
\begin{equation*}
\int_{\lbrack 0,1)}|f^{\ast }g|\leq \int_{\lbrack 0,1)}\sum_{j=1}^{\infty }%
\frac{M_{j}}{V(n_{\nu (j)})}|P_{\nu (j)}g|\leq
\end{equation*}%
\begin{equation*}
\leq \sum_{j=1}^{\infty }\frac{M_{j}}{V(n_{\nu (j)})}\int_{[0,1)}\varphi
(|P_{\nu (j)}|)\int_{[0,1)}\psi (|g|).
\end{equation*}%
Since $\varphi $ is a convex function then $\varphi (u)\leq (\varphi
(2^{2n_{\nu (j)}})/2^{n_{\nu (j)}})u$ when $0\leq u\leq 2^{n_{\nu (j)}}$. On
the other hand $0\leq P_{\nu }\leq 2^{2n_{\nu }}$ $(\nu \in \mathbb{N})$.
Consequently, for every $\nu \in \mathbb{N}$ we obtain 
\begin{equation*}
\int_{\lbrack 0,1)}\varphi (|P_{\nu (j)}|)\leq \int_{\lbrack 0,1)}\frac{%
\varphi (2^{2n_{\nu (j)}})}{2^{n_{\nu (j)}}}P_{\nu (j)}=\frac{\varphi
(2^{2n_{\nu (j)}})}{2^{n_{\nu (j)}}}.
\end{equation*}%
Therefore by (\ref{23}) we write 
\begin{equation}
\int_{\lbrack 0,1)}|f^{\ast }g|\leq \sum_{j=1}^{\infty }\frac{M_{j}}{%
V(n_{\nu (j)})}\frac{\varphi (2^{2n_{\nu (j)}})}{2^{n_{\nu (j)}}}%
\int_{[0,1)}\psi (|g|)<\infty .  \label{24}
\end{equation}%
Taking into account that $g$ is an arbitrary function of the class $\psi
(L)[0,1)$ from (\ref{24}), we conclude that $f^{\ast }$ belongs to the
Orlicz space $L^{\varphi }[0,1)$ generated by the function $\varphi $ (see,
e.g., \cite{23}, Ch. II). Since $\varphi $ satisfies $\Delta _{2}$-condition
then the class $\varphi (L)[0,1)$ coincides with $L^{\varphi }[0,1)$. Thus,
we proved that $f^{\ast }\in \varphi (L)[0,1)$.

Denote $N_{\nu }=\min \{n_{j}:j\in \mathbb{N},n_{j}\geq \text{deg}(P_{\nu
})\}$ $(\nu \in \mathbb{N})$. Let us assume that the sequences $(M_{j})$ and 
$(n_{\nu (j)})$ together with the conditions (\ref{22}) and (\ref{23}) also
satisfy the following one: For every $j\in \mathbb{N}$ the number $\nu (j+1)$
is greater than $N_{\nu (j)}$ and 
\begin{equation}
\text{card}\Big((N_{\nu (j)},\nu (j+1))\cap \{n_{k}:k\in \mathbb{N}\}\Big)%
\geq 2.  \label{25}
\end{equation}%
For every $j\in \mathbb{N}$ let us choose two natural numbers $\alpha
(j)<\beta (j)$ for which $n_{\alpha (j)}$ and $n_{\beta (j)}$ belong to the
intersection given in (\ref{25}).

Let us prove that 
\begin{equation}
\sup_{j\in \mathbb{N}}\;\sup_{k:\nu (j+1)\leq n_{k}\leq N_{\nu
(j+1)}}|S_{n_{k}}(f^{\ast })(x)-S_{n_{\beta (j)}}(f^{\ast })(x)|=\infty
\label{26}
\end{equation}%
for almost every $x\in \lbrack 0,1)$, and 
\begin{equation}
S_{n_{\beta (j)}}(f^{\ast })(x)-S_{n_{\alpha (j)}}(f^{\ast })(x)=0
\label{27}
\end{equation}%
for every $j\in \mathbb{N}$ and $x\in \lbrack 0,1)$.

Let $j\in \mathbb{N}$, $x\in E_{\nu (j+1)}$ and $k=k(\nu (j+1),x)$, where $%
E_{\nu (j+1)}$ and $k(\nu (j+1),x)$ are parameters from Lemma \ref{l:1}.
Note that for every $n,m\in \mathbb{N}$ and $x\in \lbrack 0,1)$, 
\begin{equation*}
S_{n}(f^{\ast })(x)-S_{m}(f^{\ast })(x)=
\end{equation*}%
\begin{equation}
=\sum_{i=1}^{\infty }\frac{M_{i}}{V(n_{\nu (i)})}(S_{n}(P_{\nu
(i)})(x)-S_{m}(P_{\nu (i)})(x)).  \label{28}
\end{equation}%
Hence, by virtue of (\ref{25}) and Lemma \ref{l:1} we obtain 
\begin{equation*}
|S_{n_{k}}(f^{\ast })(x)-S_{n_{\beta (j)}}(f^{\ast })(x)|=
\end{equation*}%
\begin{equation*}
=\frac{M_{j+1}}{V(n_{\nu (j+1)})}|S_{n_{k}}(P_{\nu (j+1)})(x)-S_{n_{\beta
(j)}}(P_{\nu (j+1)})(x)|\geq
\end{equation*}%
\begin{equation*}
\geq \frac{M_{j+1}}{V(n_{\nu (j+1)})}\left( \frac{V(n_{\nu (j+1)})}{16}%
-1\right) .
\end{equation*}%
Consequently, taking into account (\ref{22}) and (\ref{23}), we conclude
that (\ref{26}) holds for every point $x$ which belongs to infinite number
of sets $E_{\nu (j+1)}$ $(j\in \mathbb{N})$. Using the uniform lower
estimation for measures of portions of sets $E_{\nu }$ in dyadic intervals
(see the second estimation in Lemma \ref{l:1}) it is easy to see that $%
\bigcup_{j=N}^{\infty }E_{\nu (j+1)}$ is a set of full measure for every $%
N\in \mathbb{N}$. Therefore, the upper limit of the sequence $(E_{\nu
(j+1)}) $ is a set of full measure. Thus, (\ref{26}) holds for almost every $%
x\in \lbrack 0,1)$.

The equality (\ref{27}) follows directly from (\ref{25}) and (\ref{28}).

Let $A$ be a set of all points $x\in \lbrack 0,1)$ for which (\ref{26}) does
not hold. We have that $|A|=0$. Let us consider polynomials $Q_{r}$ $(r\in 
\mathbb{N})$ with the properties: 
\begin{equation}
\sum_{r=1}^{\infty }\Vert Q_{r}\Vert _{L^{2}}<\infty ;  \label{29}
\end{equation}%
\begin{equation}
\sup_{r\in \mathbb{N}}|Q_{r}(x)|=\infty \;\;\text{for every}\;\;x\in A.
\label{30}
\end{equation}%
Such polynomials are constructed for example in \cite{24} (see the proof of
Theorem 3).

For every $r\in\mathbb{N}$ let us find a number $j(r)\in\mathbb{N}$ for
which $\text{deg}(Q_r)\leq n_{\alpha(j(r))}$ and denote $\delta(r)=n_{%
\beta(j(r))}-n_{\alpha(j(r))}$ and $Q^\ast_r=w_{\delta(r)}Q_r$.

Since, $|Q_{r}^{\ast }|=|Q_{r}|$ then by (\ref{29}), $\sum_{r=1}^{\infty
}\Vert Q_{r}^{\ast }\Vert _{L^{2}}<\infty $. Hence, we can introduce the
function $f_{\ast }$ which is the sum of the series $\sum_{r=1}^{\infty
}Q_{r}^{\ast }$ in the space $L^{2}[0,1)$.

By the condition $\text{Sp}(n_{\beta (j(r))})\cap \lbrack 0,\max \text{Sp}%
(n_{\alpha (j(r))})]=\text{Sp}(n_{\alpha (j(r))})$ we conclude that $%
n_{\alpha (j(r))}<\delta (r)$ and that each number $h\leq n_{\alpha (j(r))}$
has the spectrum disjoint with the spectrum of $\delta (r)$. Consequently,
we have 
\begin{equation}
\text{Sp}(Q_{r}^{\ast })\subset (n_{\alpha (j(r))},n_{\beta (j(r))}];
\label{31}
\end{equation}%
\begin{equation}
S_{n_{\beta (j(r))}}(Q_{r}^{\ast })=w_{\delta (r)}Q_{r};  \label{32}
\end{equation}%
\begin{equation}
S_{n_{\alpha (j(r))}}(Q_{r}^{\ast })=0.  \label{33}
\end{equation}

For every $r\in \mathbb{N}$ and $x\in \lbrack 0,1)$ by (\ref{31})-(\ref{33})
we obtain 
\begin{equation*}
S_{n_{\beta (j(r))}}(f_{\ast })(x)-S_{n_{\alpha (j(r))}}(f_{\ast
})(x)=S_{n_{\beta (j(r))}}(Q_{r}^{\ast })(x)-S_{n_{\alpha
(j(r))}}(Q_{r}^{\ast })(x)=
\end{equation*}%
\begin{equation*}
=w_{\delta (r)}(x)Q_{r}\left( x\right) .
\end{equation*}%
Consequently, by virtue of (\ref{30}) we conclude 
\begin{equation}
\sup_{r\in \mathbb{N}}|S_{n_{\beta (j(r))}}(f_{\ast })(x)-S_{n_{\alpha
(j(r))}}(f_{\ast })(x)|=\infty  \label{34}
\end{equation}%
for every $x\in A$.

Note that for every $j\in \mathbb{N}$, $m,n\in \lbrack n_{\beta (j)},N_{\nu
(j+1)}]$ and $x\in \lbrack 0,1)$ by (\ref{31}) and (\ref{25}) we have: 
\begin{equation}
S_{n}(f_{\ast })(x)-S_{m}(f_{\ast })(x)=0.  \label{35}
\end{equation}

Set $f=f^{\ast }+f_{\ast }$. Obviously, $f\in \varphi (L)[0,1)$. By virtue
of (\ref{26}), (\ref{27}), (\ref{34}) and (\ref{35}) we easily see that 
\begin{equation*}
\sup_{k\in \mathbb{N}}|S_{n_{k}}(f)(x)|=\infty
\end{equation*}%
for every point $x\in \lbrack 0,1)$. The theorem is proved.

\end{document}